\newtheorem{theorem}{Theorem}
\newtheorem{lemma}[theorem]{Lemma}
\newtheorem{question}{Question}
\newtheorem{proposition}[theorem]{Proposition}
\newcommand{\id}{\mbox{id}}
\newcommand{\soc}{\operatorname{Soc}}
\newcommand{\Aut}{\operatorname{Aut}}
\newcommand{\sym}{\operatorname{Sym}}
\newcommand{\mpl}{\operatorname{mpl}}
\newcommand{\Ret}{\operatorname{Ret}}
\newcommand{\Ker}{\operatorname{Ker}}
\newcommand{\Lcal}{{\mathcal{L}}}
\begin{document}
\title[Nilpotent braces and the Yang--Baxter equation]{ On the Yang-Baxter equation and left  nilpotent left braces}
\author[Ced\'{o}, Gateva-Ivanova, Smoktunowicz]{Ferran Ced\'{o}, Tatiana
Gateva-Ivanova and Agata Smoktunowicz}

\subjclass[2010]{Primary 16N80, 16P90, 16N40 } \keywords{brace,
Yang-Baxter equation, nilpotent braces}

\date{\today}
\begin{abstract}
We study non-degenerate involutive set-theoretic solutions $(X,r)$
of the Yang-Baxter equation, we call them simply \emph{solutions}.
We show that the structure group $G(X,r)$ of a finite non-trivial solution $(X,r)$ cannot be an Engel group.
It is known that the structure group $G(X,r)$ of a finite
multipermutation solution $(X,r)$ is a poly-$\mathbb{Z}$ group, thus
our result gives a rich source of examples of braided groups and
left braces $G(X,r)$ which are poly-$\mathbb{Z}$ groups but not Engel groups.

We also show that a finite  solution of the Yang-Baxter equation
can be embedded in a convenient way into a finite brace and into a finite braided group.

For a left brace $A$, we explore the close relation between the
multipermutation level of the  solution associated with it and the radical chain $A^{(n+1)}=A^{(n)}* A$ introduced by Rump.
\end{abstract}

\maketitle

\section{Introduction}
Braces were introduced by Rump \cite{rump} to study non-degenerate
involutive set-theoretic solutions of the Yang-Baxter equation.

Recall that a left brace is a set $B$ with two operations, $+$ and
$\cdot$, such that $(B,+)$ is an abelian group, $(B,\cdot)$ is a
group and for every $a,b,c\in B$,
\begin{equation}\label{leftbrace}
a\cdot (b+c)+a=a\cdot b+a\cdot c.
\end{equation}
Right braces are defined similarly, changing the property
(\ref{leftbrace}) by $(a+b)\cdot c+c=a\cdot c+b\cdot c$. A two-sided
brace is a left brace which is also  a right brace. In any left
brace $(B,+,\cdot)$ one defines another operation $*$ by the rule
$$a*b=a\cdot b-a-b,
$$
for $a,b\in B$. It is known that $(B,+,\cdot)$ is a two-sided brace
if and only if $(B,+,*)$ is a Jacobson radical ring. Conversely, if
$R$ is a Jacobson radical ring, then one defines a new operation
$\circ$ on $R$ by $a\circ b=ab+a+b$ and $(R,\circ)$ is called the
adjoint group of the radical ring $R$. Then $(R,+,\circ)$ is a
two-sided brace. Hence the study of two-sided braces is equivalent
to the study of Jacobson radical rings.


In general the operation $*$ in a left brace $B$ is not associative,
but it is left distributive with respect to the sum, that is
$$a*(b+c)=a*b+a*c,$$
for $a,b,c\in B$.

Let $B$ be a left brace. For $a\in B$,  let  $\mathcal{L}_a\colon
B\longrightarrow B$ be the map defined by $\mathcal{L}_a(b)=ab-a$
for all $b\in B$. It is known that $\mathcal{L}_a$ is an
automorphism of the additive group of the left brace $B$ and the
 map $\mathcal{L}\colon (B,\cdot)\longrightarrow \Aut(B,+)$, defined by
$a\mapsto \mathcal{L}_a$, is a morphism of groups.  The kernel of
this morphism is called the socle of $B$,
$$\soc(B):=\{ a\in B\mid \mathcal{L}_a=\id \}=\{ a\in B\mid ab=a+b,\; \mbox{ for all }b\in B \}.$$
In fact the socle of a left brace $B$ is an ideal of $B$, that is, a
normal subgroup of its multiplicative group invariant by the maps
$\mathcal{L}_a$ for all $a\in B$. In particular, $\soc(B)$ is also a
subgroup of the additive group of $B$. Note that if $a,b\in
\soc(B)$, then $a-b=\mathcal{L}_b(b^{-1}a)\in \soc(B)$. Therefore
the quotient of the multiplicative group $B/\soc(B)$ is also the
quotient of the additive group and $(B/\soc(B),+,\cdot)$ is a left
brace, the left brace quotient of $B$ modulo its ideal $\soc(B)$.

Let $X$ be a non-empty set. Recall that a map $r\colon X\times
X\longrightarrow X\times X$ is a set-theoretic solution of the
Yang-Baxter equation if
$$r_{12}r_{23}r_{12}=r_{23}r_{12}r_{23},$$
where $r_{12},r_{23}\colon X\times X\times X\longrightarrow X\times
X\times X$ are the maps $r_{12}=r\times \id_X$ and
$r_{23}=\id_X\times r$. We will write $r(x,y)=({}^{x}y, x^{y})$. The
map $r$ is \emph{non-degenerate} if for every $x\in X$ the maps
$y\mapsto {}^{x}y$ and $y\mapsto y^{x}$ are bijective, $r$ is
\emph{involutive} if $r^2=\id_{X^2}$.
\smallskip

\noindent {\bf Convention.} By \emph{a solution of the YBE} (or
shortly, \emph{a solution}) we mean a non-degenerate involutive
set-theoretic solution $(X,r)$ of the Yang-Baxter equation.
\smallskip

Let $(X,r)$ be a solution of the YBE. The structure group of
$(X,r)$ is the group $G(X,r)$ with presentation
$$G(X,r)=\langle X\mid xy=({}^{x}y)(x^{y}),\; x,y\in X\rangle.$$
(Some authors call $G(X,r)$ the YB group of $(X,r)$).

It follows from the results in \cite{ess} that $X$ is naturally
embedded in $G(X,r)$. One can define a sum on $G(X,r)$ such that
$(G(X,r),+)$ is a free abelian group with basis $X$. Moreover,
$(G(X,r),+,\cdot)$ is a left brace such that
${}^{x}y=\mathcal{L}_x(y)\in X$ for all $x,y\in X$, see \cite{cjo2,
Tatyana}.

We say that this is \emph{the canonical left brace structure on}
$G(X,r)$. It is known that the group $G$ acts on the set $X$ from
the left (and from the right). Moreover, the assignments $x \mapsto
\Lcal_x$ extend to a group homomorphism $\Lcal: G \longrightarrow
\sym_X$. The image $ \Lcal (G)$ of this homomorphism is a subgroup
of $\sym_X$ called \emph{the permutation group  of} $(X,r)$  and
denoted by $\mathcal{G}(X,r).$ It is known that
$\mathcal{G}(X,r):=\langle \mathcal{L}_x \mid x\in X \rangle$, where
$\Lcal_x(y)={}^{x}y$, for all $x,y\in X$. The group epimorphism
$\Lcal: G(X,r)\longrightarrow \mathcal{G}(X,r), x\mapsto \Lcal_x$
has kernel $\Ker \Lcal = \soc(G(X,r))$ (as sets).
 Thus $\mathcal{G}(X,r)$ inherits a
structure of a left brace via this natural isomorphism of groups, we
say that this is \emph{the canonical structure of a left brace on}
$\mathcal{G}(X,r)$. Moreover, $G(X,r)/\soc(G(X,r))\cong
\mathcal{G}(X,r)$ as symmetric groups (i.e. involutive braided
groups) and as left braces, \cite{ Tatyana, cjo2}.


In this paper, we prove some general results about braces and apply
these to study the close relations between the properties of
solutions $(X,r)$ and their associated left braces $G(X,r)$. This is
in the spirit of \cite{Tatyana}  and \cite{cjo2}.


\section{Some results on  $G(X,r)$}
The results of this section were motivated by a result from
\cite{Tatyana}, which assures that the structure group $G(X,r)$ of a
non-trivial solution $(X,r)$ of the Yang Baxter cannot be a
two-sided brace.

Let $(B,+,\cdot)$ be a left brace. As usual, for any $a,b\in B$ and
positive integer $m$, $ab$ will denote $a\cdot b$ and $a^m$ will
 denote $a\cdot a\cdots a$ (where $a$ appears $m$ times).

\begin{lemma}\label{nil} Let $B$ be a left brace whose additive group
 $(B,+)$ is torsion-free.
 Assume that
 $a, b\in B$ and that there is an integer
 $n(a,b)$ such that $a*
(a*(\ldots a*(a* b)\ldots ))=0$ (where $a$ occurs $n(a,b)$ times
and $b$ once in this equation).
 Assume moreover that $\mathcal{L}_{a^n}=\id$ for some integer
 $n$.
  Then $a*b=0$ or equivalently,  $a\cdot b=a+b$.
\end{lemma}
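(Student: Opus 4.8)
The plan is to reinterpret the whole statement in terms of the additive automorphism $\phi := \mathcal{L}_a$ of $(B,+)$. Since $a*b = a\cdot b - a - b = \mathcal{L}_a(b) - b = (\phi - \id)(b)$ and $\mathcal{L}_a$ is additive, a one-line induction shows that the $p$-fold iterate $a*(a*(\cdots a*(a*b)\cdots))$, with $p := n(a,b)$ occurrences of $a$, equals $(\phi - \id)^{p}(b)$. Thus the first hypothesis reads $(\phi - \id)^{p}(b) = 0$. Because $\mathcal{L}\colon (B,\cdot)\to\Aut(B,+)$ is a group homomorphism, the second hypothesis $\mathcal{L}_{a^m} = \id$ (with $m$ a positive integer) reads $\phi^{m} = \id$. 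Every operator occurring below lies in the commutative subring $\mathbb{Z}[\phi]$ of $\operatorname{End}(B,+)$, so integer-polynomial manipulations in $\phi$ are legitimate, and the desired conclusion $a*b = 0$ is exactly $(\phi - \id)(b) = 0$.

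Next I would pass to $c := (\phi - \id)(b) = a*b$ and prove $c = 0$. Two relations are at hand. First, $(\phi - \id)^{p-1}(c) = (\phi - \id)^{p}(b) = 0$. Second, setting $q(x) := x^{m-1} + \cdots + x + 1 = (x^{m}-1)/(x-1) \in \mathbb{Z}[x]$, one has $q(\phi)\,c = q(\phi)(\phi - \id)(b) = (\phi^{m} - \id)(b) = 0$. The role of $q$ is the arithmetic identity $q(1) = m$: dividing, $q(x) = m + (x-1)\tilde q(x)$ with $\tilde q \in \mathbb{Z}[x]$, whence $m\,c = q(\phi)\,c - (\phi - \id)\tilde q(\phi)\,c = -(\phi - \id)\tilde q(\phi)\,c$.

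The heart of the argument is a downward induction that peels one factor of $(\phi - \id)$ off the annihilator of $c$. Assume $(\phi - \id)^{k}(c) = 0$ for some $k \geq 1$. Applying $(\phi - \id)^{k-1}$ to the identity $m\,c = -(\phi - \id)\tilde q(\phi)\,c$ and using that all these operators commute, I obtain $m\,(\phi - \id)^{k-1}(c) = -\tilde q(\phi)(\phi - \id)^{k}(c) = 0$. This is precisely where torsion-freeness of $(B,+)$ is used: from $m \geq 1$ and $m\,(\phi - \id)^{k-1}(c) = 0$ in a torsion-free group we deduce $(\phi - \id)^{k-1}(c) = 0$. Starting at $k = p-1$ and iterating down to $k = 1$ gives $c = (\phi - \id)^{0}(c) = 0$, i.e.\ $a*b = 0$.

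The only real obstacle is bookkeeping: packaging the nested $*$-product into the clean operator power $(\phi - \id)^{p}$, and keeping every polynomial identity over $\mathbb{Z}$ so that torsion-freeness may cancel the factor $m$. Conceptually the lemma is the statement that a finite-order automorphism of a torsion-free abelian group has no generalized fixed vectors beyond its genuine fixed vectors — a fact that over $\mathbb{Q}$ follows from semisimplicity of finite-order operators, but which is proved here directly by hand.
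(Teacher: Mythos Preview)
Your proof is correct and follows essentially the same strategy as the paper's: both identify the $k$-fold iterate $a*(\cdots *(a*b)\cdots)$ with $(\mathcal{L}_a-\id)^k(b)$, use $\mathcal{L}_a^n=\id$ to express $n\cdot(\mathcal{L}_a-\id)^{k-1}(b)$ in terms of higher powers of $(\mathcal{L}_a-\id)$ applied to $b$, and then run a downward induction via torsion-freeness. The only difference is packaging---you factor $x^n-1=(x-1)q(x)$ with $q(1)=n$, whereas the paper expands $\mathcal{L}_a^n=((\mathcal{L}_a-\id)+\id)^n$ with binomial coefficients---but the underlying mechanism is identical.
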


\begin{proof}
Note that $\mathcal{L}_a(b)=a*b+b$, for $a,b\in B$. Let $m$ be a
positive integer. Let $e_1(a,b)=a*b$ and $e_{m+1}(a,b)=a*e_m(a,b)$,
for $a,b\in B$. It can be proved by induction on $m$ that
$$\mathcal{L}_{a^m}(b)=b+\sum_{i=1}^m {m\choose i} e_{i}(a,b).$$
 Since $\mathcal{L}_{a^n}=\id$, we have
$$b=\mathcal{L}_{a^n}(b)=b+\sum_{i=1}^n {n\choose i} e_{i}(a,b),$$
and thus
$$ne_1(a,b)=-\sum_{i=2}^n {n\choose i} e_{i}(a,b).$$
Hence
\begin{eqnarray}\label{eq1}
&&ne_{k}(a,b)=-\sum_{i=2}^n {n\choose i}
e_{i+k-1}(a,b),\end{eqnarray}
for all positive integer $k$.

Suppose that  $e_1(a,b)=a*b\neq 0$.
Let $n(a,b)$ be the smallest positive integer such that
$e_{n(a,b)}(a,b)=0$. Then, by (\ref{eq1}),
$$ne_{n(a,b)-1}(a,b)=-\sum_{i=2}^n {n\choose i} e_{i+n(a,b)-2}(a,b)=0.$$
Since $(B,+)$ is torsion-free, we have that $e_{n(a,b)-1}(a,b)=0$,
in contradiction with the definition of $n(a,b)$.
\end{proof}

Let $G$ be a group.   Following the notation of \cite{robinson}, for
$g,h\in G$, we denote by $[g,h]$ the element $[g,h]=g^{-1}h^{-1}gh$.
Recall that the group $G$ is an Engel group if and only if for each
$g,h\in G$ there exists a positive integer $n(g,h)$ such that
$[[\dots [[g,h],h]\dots],h]=1$, where $h$ occurs $n(g,h)$ times.

\begin{theorem}\label{engel} Let $B$ be a left brace such that its additive group
$(B,+)$ is torsion-free and $[B: \soc(B)]=n<\infty$. If the
multiplicative group $(B,\cdot)$ of the left brace $B$ is an Engel
group, then $B$ is a trivial brace, that is $a\cdot b=a+b$, for
all $a,b\in B$.
\end{theorem}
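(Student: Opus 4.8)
The plan is to deduce everything from \lemref{nil}, whose two hypotheses --- torsion-freeness and $\mathcal{L}_{a^n}=\id$ --- are almost free, so that the genuine work is to manufacture the $*$-nilpotency hypothesis $e_{n(a,b)}(a,b)=0$ for suitable elements out of the Engel condition. First I would record two elementary consequences of $[B:\soc(B)]=n$. Since the multiplicative quotient $B/\soc(B)$ is a group of order $n$, every $a\in B$ satisfies $a^n\in\soc(B)$, whence $\mathcal{L}_a^n=\mathcal{L}_{a^n}=\id$; and since the additive quotient $(B/\soc(B),+)$ then has exponent dividing $n$ as well, we get $nB\subseteq\soc(B)$. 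The first fact supplies the second hypothesis of \lemref{nil} for any element (or its inverse); the second fact is what will later upgrade ``$\mathcal{L}_a$ fixes $\soc(B)$'' to ``$\mathcal{L}_a=\id$''.

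The heart of the matter is the claim that $\mathcal{L}_a(s)=s$, equivalently $a*s=0$, for every $a\in B$ and every $s\in\soc(B)$. Fix such $a,s$ and form the iterated commutators $c_0=s$, $c_{k+1}=[c_k,a]$ in the multiplicative group. Because $\soc(B)$ is an ideal, hence a normal subgroup of $(B,\cdot)$, an immediate induction shows $c_k\in\soc(B)$ for all $k$. The pivotal computation --- and the step I expect to be the crux --- is that for any $c\in\soc(B)$ one has $[c,a]=(\mathcal{L}_a^{-1}-\id)(c)=a^{-1}*c$: this follows directly from $\mathcal{L}_c=\id$ (so that $c\cdot x=c+x$ and $c^{-1}=-c$) together with $a^{-1}=\mathcal{L}_a^{-1}(-a)$. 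Thus the recursion is \emph{exactly} $c_{k+1}=a^{-1}*c_k$, so $c_k=e_k(a^{-1},s)$ is precisely the iterated $*$-expression appearing in \lemref{nil}. This identity is the decisive move, turning the group-theoretic Engel relation into the brace-theoretic $*$-nilpotency relation; the fact that socle elements multiply additively is what makes the correction terms that would otherwise appear in a commutator of arbitrary elements drop out.

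With this in hand the claim follows quickly. The Engel hypothesis applied to the pair $(s,a)$ yields an $N$ with $c_N=0$, that is $e_N(a^{-1},s)=0$; and \lemref{nil}, applied with $a^{-1}$ in place of $a$ and $s$ in place of $b$ (legitimate since $\mathcal{L}_{(a^{-1})^n}=(\mathcal{L}_a^n)^{-1}=\id$), forces $a^{-1}*s=0$, i.e.\ $\mathcal{L}_a(s)=s$. Finally I would finish using torsion-freeness: for arbitrary $x\in B$ we have $nx\in\soc(B)$, so $\mathcal{L}_a(nx)=nx$, that is $n(\mathcal{L}_a(x)-x)=0$, and as $(B,+)$ is torsion-free this gives $\mathcal{L}_a(x)=x$. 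Hence $\mathcal{L}_a=\id$ for every $a$, so $a\cdot b=a+b$ for all $a,b\in B$ and $B$ is trivial. Note that all three hypotheses are used: torsion-freeness in \lemref{nil} and in the last step, the finite index to produce both $\mathcal{L}_a^n=\id$ and $nB\subseteq\soc(B)$, and the Engel condition to trigger the vanishing $c_N=0$.
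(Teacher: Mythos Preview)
Your proof is correct and follows essentially the same approach as the paper: both identify the key identity $[c,a]=a^{-1}*c$ for $c\in\soc(B)$, use the Engel condition to obtain vanishing of the iterated $*$-product, and then invoke \lemref{nil}. The only difference is cosmetic ordering: the paper applies Engel to $nb\in\soc(B)$, distributes the $n$ out via $a^{-1}*(nb)=n(a^{-1}*b)$, uses torsion-freeness, and then applies \lemref{nil} directly to $b$; you instead apply \lemref{nil} first to an arbitrary $s\in\soc(B)$ and use torsion-freeness afterwards to pass from $nx$ to $x$.
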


\begin{proof}
Let $a\in B$ and $c\in \soc(B)$. Note that
$$\mathcal{L}_a(c)=ac-a=aca^{-1}a-a=aca^{-1}+a-a=aca^{-1}.$$
Hence
\begin{eqnarray*}[a,c]&=&a^{-1}c^{-1}ac=a^{-1}c^{-1}a+c\\
&=&\mathcal{L}_{a^{-1}}(c^{-1})+c= \mathcal{L}_{a^{-1}}(-c)+c\\
&=&-\mathcal{L}_{a^{-1}}(c)+c= -a^{-1}*c-c+c\\
&=&-a^{-1}*c=(a^{-1}*c)^{-1}.
\end{eqnarray*}
Hence $[c,a]=a^{-1}*c$. Therefore $[[\dots
[[c,a],a]\dots],a]=a^{-1}*(\dots a^{-1}*(a^{-1}*c)\dots )$. Let
$b\in B$. Since $[B: \soc(B)]=n<\infty$ and $\soc(B)$ is an ideal of
$B$, $nb\in\soc(B)$. Since $(B,\cdot)$ is an Engel group, there
exists a positive integer $m$ such that $$[[\dots
[[(nb),a],a]\dots],a]=1,
$$ (where $a$ occurs $m$ times).
Hence $$0=1=a^{-1}*(\dots a^{-1}*(a^{-1}*(nb))\dots
)=n(a^{-1}*(\dots a^{-1}*(a^{-1}*b)\dots )),$$ (where $a^{-1}$
appears $m$ times). But $(B,+)$ is torsion free, hence
$$a^{-1}*(\dots a^{-1}*(a^{-1}*b)\dots )=0,$$ where $a^{-1}$ occurs $m$ times.

By Lemma~\ref{nil}, $a^{-1}*b= 0$. Therefore $a*b = 0$, for all $a,b
\in B,$  or equivalently, $B$ is a trivial left brace.
\end{proof}

We call a left brace $B$  left nilpotent if $B^{n}=0$ for some $n$,
where $B^{n+1}=B* B^{n}$ is the chain introduced by Rump in
\cite{rump}. As a consequence of Lemma~\ref{nil} and
Theorem~\ref{engel}, we have the following two results.


\begin{theorem} \label{one}
Let $(X,r)$ be a finite solution of the YBE. Assume that for each
$a, b\in X$ there is a positive integer $n=n(a,b)$ such that
the equality $a* (a*(\ldots a*(a* b)))=0$ holds in $G(X,r)$,($a$
occurs $n$ times and $b$ occurs once in this equality). Then $(X,r)$
is the trivial solution. In particular, if  $G(X,r)$ is a left
nilpotent left brace, then $(X,r)$ is the trivial solution.
\end{theorem}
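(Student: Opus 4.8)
The plan is to reduce Theorem~\ref{one} to the already-proved Lemma~\ref{nil} by checking that the hypotheses of that lemma hold for every pair $a,b\in X$ inside the structure group $G=G(X,r)$ equipped with its canonical left brace structure. Recall that $(G,+)$ is a free abelian group with basis $X$, so in particular $(G,+)$ is torsion-free; this is exactly the additive hypothesis of Lemma~\ref{nil}. The hypothesis ``$a*(a*(\dots a*(a*b)))=0$'' for the relevant $n=n(a,b)$ is assumed outright in the statement, so the only remaining thing I need to produce is the condition $\mathcal{L}_{a^{n}}=\id$ for some integer $n$ depending on $a$.

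To get $\mathcal{L}_{a^{n}}=\id$, I would use finiteness of $X$. The permutation group $\mathcal{G}(X,r)=\Lcal(G)=\langle \mathcal{L}_x\mid x\in X\rangle$ is a subgroup of the symmetric group $\sym_X$, and since $X$ is finite, $\sym_X$ is a finite group; hence $\mathcal{G}(X,r)$ is finite. The map $a\mapsto \mathcal{L}_a$ is a group homomorphism $(G,\cdot)\to\sym_X$, so for $a\in X$ the image $\mathcal{L}_a$ has finite order, say $k=k(a)$, giving $\mathcal{L}_{a^{k}}=(\mathcal{L}_a)^{k}=\id$. This supplies precisely the integer $n$ needed in Lemma~\ref{nil}. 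Thus for each pair $a,b\in X$ both hypotheses of Lemma~\ref{nil} are met, and the lemma yields $a*b=0$, equivalently $a\cdot b=a+b$, for all $a,b\in X$.

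Having $a*b=0$ for all $a,b\in X$ is exactly the statement that $\mathcal{L}_a(b)=a*b+b=b$ for all $a,b\in X$, i.e.\ every generator acts trivially, so $\mathcal{G}(X,r)$ is trivial and ${}^{x}y=\mathcal{L}_x(y)=y$, $x^{y}=x$ for all $x,y$; this means $r(x,y)=(y,x)$, which is the trivial solution. For the final sentence I would observe that if $G(X,r)$ is left nilpotent, say $G^{n}=0$ with $G^{n+1}=G*G^{n}$, then in particular any $n$-fold left-iterated $*$-product $a*(a*(\dots a*(a*b)))$ lies in $G^{n}=0$ (it is a special case of an element of the form $G*(G*\cdots)$), so the left-nilpotency hypothesis is a uniform way of guaranteeing the per-pair vanishing condition, and the conclusion follows from the first part.

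The main obstacle is not any deep computation but rather being careful about two bookkeeping points. First, I must make sure the $*$-product condition in the statement is genuinely the same object as the iterate $e_m(a,b)$ built in Lemma~\ref{nil}; since $*$ is only left distributive and generally non-associative, the precise left-normed bracketing $a*(a*(\dots a*(a*b)))$ matters, and I should confirm the statement's parenthesization matches $e_m$ so that Lemma~\ref{nil} applies verbatim. Second, for the left-nilpotent clause I must check that the left-normed iterate really is a member of the Rump chain $G^{n}$ — this follows because $G^{2}=G*G\ni a*b$, and inductively $a*(\cdots)\in G*G^{m}=G^{m+1}$, so an $n$-fold iterate lies in $G^{n}$, and left nilpotency forces it to be $0$. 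Neither point is hard, but both must be stated cleanly to make the reduction to Lemma~\ref{nil} airtight.
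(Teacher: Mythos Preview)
Your proposal is correct and follows essentially the same route as the paper: verify that $(G(X,r),+)$ is torsion-free, use finiteness of $X$ to get $\mathcal{L}_{a^{k}}=\id$ (the paper phrases this as $[G(X,r):\soc(G(X,r))]<\infty$, which is the same observation), and then invoke Lemma~\ref{nil} to obtain $a*b=0$ for all $a,b\in X$, whence $r$ is trivial. Your explicit treatment of the left-nilpotent clause is more detailed than the paper's, which leaves that implication tacit.
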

\begin{proof}
 Since $G(X,r)/\soc(G(X,r))\cong \mathcal{G}(X,r)$ is a
subgroup of the symmetric group $\sym_X$ of the finite set $X$, we
have that $[G(X,r): \soc(G(X,r))]<\infty$. Hence,  by
Lemma~\ref{nil}, ${}^{a}b=\mathcal{L}_a(b)=ab-a=a+b-a=b$, for all
$a,b\in X$. In particular, $(X,r)$ is the trivial solution.
\end{proof}

%

It is known that any ordered abelian-by-finite group is abelian, see
for example \cite[Section 4]{jo}. It is also known that any
torsion-free nilpotent group is ordered (see \cite[Lemma
13.1.6]{passman}). Recall  that if $(X,r)$ is a finite
solution of the YBE,  then $G(X,r)$ is a torsion-free, solvable
and abelian-by-finite group (see \cite{ess} and \cite{GIVdB}).
Therefore, if $G(X,r)$ is nilpotent, then it is abelian. In this
case the canonical left brace structure on $G(X,r)$ is trivial and
$(X,r)$ is the trivial solution. We have the following related
result.

\begin{theorem}
\label{two}
 Let $(X,r)$ be a finite solution of the YBE.
  If the structure group
$G(X,r)$ is an Engel group, then $(X,r)$ is the trivial solution.
\end{theorem}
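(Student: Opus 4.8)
The plan is to deduce Theorem~\ref{two} from Theorem~\ref{engel} by verifying its two hypotheses for the canonical left brace $B = G(X,r)$, so the argument should be quite short given the machinery already assembled. First I would recall the two standard structural facts quoted in the paragraph preceding the statement: since $(X,r)$ is a finite solution, $G(X,r)$ is torsion-free, so its additive group $(G(X,r),+)$ is a torsion-free abelian group; this is exactly the first hypothesis of Theorem~\ref{engel}. Second, as already used in the proof of Theorem~\ref{one}, the quotient $G(X,r)/\soc(G(X,r)) \cong \mathcal{G}(X,r)$ embeds into the symmetric group $\sym_X$ of the finite set $X$, hence is finite; this gives $[G(X,r):\soc(G(X,r))] = n < \infty$, the second hypothesis.

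Next I would invoke the assumption directly: the multiplicative group $(G(X,r),\cdot)$ is, by hypothesis, an Engel group. With both torsion-freeness of $(B,+)$ and finiteness of the index $[B:\soc(B)]$ in hand, I can apply Theorem~\ref{engel} verbatim to conclude that the canonical left brace $G(X,r)$ is trivial, i.e. $a\cdot b = a+b$ for all $a,b \in G(X,r)$. Restricting to $a,b \in X$ and using that ${}^{x}y = \mathcal{L}_x(y) = x\cdot y - x = (x+y) - x = y$, I would conclude that the left action is trivial, and symmetrically that $x^{y} = x$, so $r(x,y) = (y,x)$ for all $x,y \in X$. That is precisely the statement that $(X,r)$ is the trivial solution.

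The only genuine content is checking that the hypotheses of Theorem~\ref{engel} really do hold, and here the potential obstacle is the torsion-freeness of the \emph{additive} group: one must be careful that the quoted result from \cite{ess} and \cite{GIVdB} that $G(X,r)$ is torsion-free as a group transfers to $(G(X,r),+)$. In fact this is immediate from the canonical brace structure, since $(G(X,r),+)$ is by construction the free abelian group on the basis $X$, which is manifestly torsion-free; so no real difficulty arises. The remaining passage from "$B$ is a trivial brace" to "$(X,r)$ is the trivial solution" is the routine translation between the brace operation and the map $r$ via $\mathcal{L}_x$, exactly as carried out at the end of the proof of Theorem~\ref{one}.

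I would therefore expect the proof to read almost identically to that of Theorem~\ref{one}, with Lemma~\ref{nil} replaced by Theorem~\ref{engel}: the work is entirely in matching hypotheses, and no new estimate or construction is needed.
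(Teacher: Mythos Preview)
Your proposal is correct and follows exactly the paper's approach: the paper's proof consists of the single sentence ``This is a consequence of Theorem~\ref{engel},'' and you have simply spelled out why the hypotheses of Theorem~\ref{engel} hold (torsion-freeness of $(G(X,r),+)$ as the free abelian group on $X$, and finite index of the socle via $\mathcal{G}(X,r)\leq\sym_X$) together with the routine passage from a trivial brace to a trivial solution.
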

\begin{proof}
This is a consequence of Theorem~\ref{engel}.
\end{proof}

\section{Right nilpotent left braces}

Etingof, Schedler and Soloviev in \cite{ess} introduced the retract
solution of a given solution of the YBE. Let $(X,r)$ be a solution
of the YBE.  The retract relation $\sim $ on the set $X$ with
respect to $r$ is defined by $x\sim y$ if $\sigma_{x}=\sigma_{y}$,
where $\sigma_x(z)={}^{x}z$. Then the retraction of $(X,r)$ is
$\Ret(X,r)=([X], r_{[X]})$, where $[X]=X/\sim$ and
$$r_{[X]}([x],[y])=([{}^{x}y],[x^{y}]),$$
where $[x]$ denotes the $\sim$-class of $x\in X$. We define
$\Ret^1(X,r)=\Ret(X,r)$ and $\Ret^{k}(X,r)=\Ret(\Ret^{k-1}(X,r))$
for $k>1$. A solution $(X,r)$ of the YBE is called a
multipermutation solution of level $m$ if $m$ is the smallest
nonnegative integer such that the solution $\Ret^{m}(X,r)$ has
cardinality $1$; in this case we write $\mpl(X,r)=m$.

Let $B$ be a left brace. By $B^{(m)}$ we mean the chain of ideals
introduced by Rump in \cite{rump}, so $B^{(1)}=B$ and
$B^{(n+1)}=B^{(n)}*B$. We say that $B$ is right nilpotent if there
exists a positive integer $n$ such that $B^{(n)}=0$.

Recall that if $B$ is a left brace, then the map $r\colon B\times
B\longrightarrow B\times B$ defined by
$$r(a,b)=(\mathcal{L}_a(b),\mathcal{L}^{-1}_{\mathcal{L}_a(b)}(a)),$$
is a solution of the YBE. This is the solution of the YBE associated
with the left brace $B$ (see \cite{cjo}).

\begin{proposition}\label{five}
Let $B$ be a nonzero left brace and let $(B,r)$ be its
associated solution of the YBE. Then the multipermutation level of
$(B,r)=m<\infty $ if and only if $B^{(m+1)}=0$ and $B^{(m)}\neq 0.$
\end{proposition}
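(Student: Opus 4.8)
The plan is to reduce the statement to an iterated comparison between the socle quotient of $B$ and the retraction of its associated solution, and then to translate the condition ``$\Ret^m(B,r)$ is trivial'' into the vanishing of the Rump chain $B^{(m+1)}$.

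First I would establish the crucial lemma that the retraction $\Ret(B,r)$ is isomorphic to the solution associated with the quotient brace $B/\soc(B)$. The retract relation identifies $a$ and $b$ precisely when $\sigma_a=\sigma_b$, i.e. when $\mathcal{L}_a=\mathcal{L}_b$; since $\mathcal{L}\colon(B,\cdot)\to\Aut(B,+)$ is a group homomorphism with kernel $\soc(B)$, this happens exactly when $a$ and $b$ lie in the same coset of $\soc(B)$, and (as noted before the statement) the additive and multiplicative cosets of $\soc(B)$ coincide. Thus the underlying set of $\Ret(B,r)$ is $B/\soc(B)$. Writing $\pi\colon B\to B/\soc(B)$ for the canonical brace epimorphism and using $\pi\circ\mathcal{L}_a=\mathcal{L}^{B/\soc(B)}_{\pi(a)}\circ\pi$ together with the analogous identity for $\mathcal{L}^{-1}$, one checks that both components of the retracted map $r_{[B]}$ agree with the associated solution map of $B/\soc(B)$. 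This is the step I expect to be the main obstacle: it is where the genuine content lives, and the coset identification together with the verification that the two maps coincide on \emph{both} coordinates must be carried out carefully.

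With this lemma in hand I would argue by induction on $m$ that $\mpl(B,r)\le m$ if and only if $B^{(m+1)}=0$. The base case $m=0$ is immediate, since cardinality $1$ means $B=0=B^{(1)}$. For the inductive step, the lemma gives $\Ret^{m}(B,r)\cong\Ret^{m-1}$ of the solution of $\bar B:=B/\soc(B)$, so $\mpl(B,r)\le m$ iff $\mpl(\bar B,r_{\bar B})\le m-1$, which by the inductive hypothesis is equivalent to $\bar B^{(m)}=0$. It then remains to show $\bar B^{(m)}=0$ if and only if $B^{(m+1)}=0$. For this I would first note that the chain commutes with the surjective brace homomorphism $\pi$, that is $\pi(B^{(k)})=\bar B^{(k)}$ for all $k$ (a short induction using $\pi(I*J)=\pi(I)*\pi(J)$ and $\pi(B)=\bar B$). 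Hence $\bar B^{(m)}=0$ is equivalent to $B^{(m)}\subseteq\soc(B)$, and since $\soc(B)=\{x\mid x*b=0 \text{ for all } b\in B\}$ this says exactly $B^{(m)}*B=0$, that is $B^{(m+1)}=0$.

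Finally I would assemble the conclusion: $\mpl(B,r)=m<\infty$ means $\mpl(B,r)\le m$ but $\mpl(B,r)\not\le m-1$, which by the equivalence just proved is $B^{(m+1)}=0$ together with $B^{(m)}\neq 0$; this is precisely the asserted characterization, and both implications follow at once. Note that $B^{(1)}=B\neq 0$ since $B$ is nonzero, so the extreme case $m=1$ is covered as well.
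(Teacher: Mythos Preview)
Your proposal is correct and follows essentially the same approach as the paper: both hinge on identifying $\Ret(B,r)$ with the solution associated with $B/\soc(B)$ and on the equivalence $B^{(m)}\subseteq\soc(B)\Leftrightarrow B^{(m)}*B=B^{(m+1)}=0$. The only cosmetic difference is that you package the argument as a single induction on the equivalence $\mpl(B,r)\le m\Leftrightarrow B^{(m+1)}=0$ and then read off both directions at once, whereas the paper runs two separate inductions (one per implication) and cites the retraction--socle lemma from \cite{rump,cjo,Tatyana} rather than sketching its proof.
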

\begin{proof} Note that
$\soc(B)=\{b\in B\mid  b* a=0$ for every $a\in B\}$.

First we shall prove the implication ($\mpl (B,r) = m) \Rightarrow
(B^{(m+1)}=0$ and $B^{(m)}\neq 0$). We use induction on $m = \mpl
(B,r)$. Suppose $\mpl (B,r) = 1$. Therefore,
$\mathcal{L}_{a}(b)=a*b+b=b$ which is equivalent to   $a*b=0$ for
all $a,b\in B$. It follows that $B*B=0$, so $B^{(2)}=0$.
 But $B$ is a nonzero left brace, hence $B^{(1)}=B\neq 0$. This gives the base for induction.

Suppose now that for all $k$, $1\leq k\leq m-1$, the condition
$\mpl (B,r) = k \leq  m-1$ implies $B^{(k+1)} = 0$ and
$B^{(k)}\neq 0$. Assume that  $\mpl(B,r) = m$, then the retraction
$\Ret(B,r) = ([B],r_{[B]})$ has multipermutation level $m-1$.

Moreover, there is an isomorphism of left braces (or equivalently an
isomorphism of braided groups) $B/\soc(B) \cong [B]$ and $\Ret(B,r)$
is isomorphic to the solution of the YBE associated with $B/\soc(B)$
(\cite{rump}, \cite{cjo}, \cite{Tatyana}). Hence by the inductive
assumption $(B/\soc(B))^{(m)}= 0$ and $(B/\soc(B))^{(m-1)}\neq 0$.
This implies
 $B^{(m)}\subseteq \soc(B)$ and that $B^{(m-1)}$ is not a subset of $\soc(B)$. Therefore $B^{(m+1)}=0$ and $B^{(m)}\neq 0$.

 Now we prove the inverse implication:
($B^{(m+1)}=0$ and $B^{(m)}\neq 0$) $\Rightarrow$ ($\mpl (B,r) =
m$).

The base for the induction is clear. Assume that for all $k\leq m$
the implication is true. Suppose that $B$ is a left brace such that
$B^{(m+2)}=0$ and $B^{(m+1)}\neq
 0$.
Recall  that $B^{(m+2)}=B^{(m+1)}*B$, therefore
$(B/\soc(B))^{(m+1)}=0.$ On the other hand  $B^{(m+1)}\neq 0$ and
$B^{(m+1)}=B^{(m)}*B$ imply $(B/\soc(B))^{(m)}\neq 0$. By the
inductive assumption
 $\mpl (\Ret(B,r))=m$, and therefore, $\mpl(B,r) = m+1$. This proves the
proposition.
\end{proof}

\section{Embedding solutions and groups into finite braces and finite rings}

%

In this section we will show that a finite solution of  the YBE
can  be embedded (in an explicit way) into a finite left brace.
Recall that it was shown in \cite{Tatyana} that there is a canonical
one-to-one correspondence between left braces and symmetric groups
(in the sense of Takeuchi \cite{Ta}). Therefore Proposition
\ref{three}
 also shows explicitly how to embed a finite solution of the YBE into a finite symmetric group.


\begin{proposition}\label{three}
Let $(X,r)$ be a finite solution of the YBE. Then there is a finite
left brace $(B, +, \cdot)$ such that $X\subseteq B$ generates the
additive group of \textbf{$B$}. Moreover, $(X,r)$ is a subsolution
of the solution $(B, \sigma)$ associated canonically with the left
brace $B$, that is, $X$ is $\sigma$-invariant and $r =
\sigma_{|X\times X}$ is the restriction of $\sigma$ on $X \times
X$.\end{proposition}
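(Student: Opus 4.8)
The plan is to realize the desired finite brace as a reduction of the structure group. Let $G = G(X,r)$ be equipped with its canonical left brace structure, so that $(G,+)$ is the free abelian group with basis $X$, each $\mathcal{L}_x$ permutes this basis (because ${}^{x}y\in X$ and $y\mapsto {}^{x}y$ is bijective), and $(X,r)$ sits inside $G$ as a subsolution of the associated solution $(G,\sigma)$, i.e. $r=\sigma_{|X\times X}$. Since the associated solution is functorial under morphisms of left braces --- if $f\colon B\to B'$ is a brace homomorphism then $f(\mathcal{L}_a(b))=\mathcal{L}_{f(a)}(f(b))$, whence $(f\times f)\sigma_B=\sigma_{B'}(f\times f)$ --- it suffices to produce an ideal $I$ of $G$ of finite additive index such that the canonical epimorphism $\pi\colon G\to G/I$ is injective on $X$. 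Indeed, for such an $I$ the quotient $B:=G/I$ is a finite left brace, $\pi(X)$ generates $(B,+)$ because $X$ generates $(G,+)$, and functoriality together with ${}^{x}y,x^{y}\in X$ forces $\pi(X)$ to be $\sigma_B$-invariant with the restriction of $\sigma_B$ to $\pi(X)\times\pi(X)$ matching $r$ under the identification $X\cong\pi(X)$.

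To build $I$, set $n:=|\mathcal{G}(X,r)|=[G:\soc(G)]$, which is finite because $\mathcal{G}(X,r)\le\sym_X$. Recall that the additive and the multiplicative quotients of $G$ by $\soc(G)$ coincide; hence $(G/\soc(G),+)$ is a finite abelian group of order $n$, so $ng\in\soc(G)$ for every $g\in G$, that is $nG\subseteq\soc(G)$. Now fix any integer $m\ge 2$ divisible by $n$ (for instance $m=2n$) and put $I:=mG$, the additive subgroup $m\mathbb{Z}^X$. It is an additive subgroup, and it is $\mathcal{L}$-invariant since $\mathcal{L}_a(mg)=m\mathcal{L}_a(g)$. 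The only nonroutine point will be that $I$ is normal in $(G,\cdot)$: for $c\in\soc(G)$ one has $aca^{-1}=\mathcal{L}_a(c)$, exactly the computation used in the proof of Theorem~\ref{engel}, and since $mG\subseteq\soc(G)$ this gives $a(mg)a^{-1}=\mathcal{L}_a(mg)=m\mathcal{L}_a(g)\in mG$ for all $a,g\in G$. Thus $I=mG$ is an ideal of the brace $G$.

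It then remains to check injectivity of $\pi$ on $X$ and finiteness. The quotient $B=G/mG$ has additive group $(\mathbb{Z}/m\mathbb{Z})^X$, hence is finite. Distinct basis elements $x\neq x'$ of $X$ satisfy $x-x'\notin m\mathbb{Z}^X$ whenever $m\ge 2$ (its coordinates lie in $\{-1,0,1\}$ and are not all zero), so $\pi(x)\neq\pi(x')$ and $\pi_{|X}$ is injective. Identifying $X$ with $\pi(X)\subseteq B$, the first paragraph yields that $X$ generates $(B,+)$, that $X$ is $\sigma$-invariant, and that $r=\sigma_{|X\times X}$, as required. I expect the main obstacle to be precisely the normality of $mG$ in the multiplicative group: this fails for a general additive subgroup, and it is exactly the inclusion $mG\subseteq\soc(G)$ --- which is what forces $m$ to be a multiple of $n$ --- that rescues it through the identity $aca^{-1}=\mathcal{L}_a(c)$ on the socle, while keeping simultaneously $m\ge 2$ is what separates the points of $X$.
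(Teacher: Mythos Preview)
Your proof is correct and follows essentially the same route as the paper: quotient the structure group $G=G(X,r)$ by the ideal $I=nG$ with $n=[G:\soc(G)]$, using $nG\subseteq\soc(G)$ to get normality via $a c a^{-1}=\mathcal{L}_a(c)$ for $c\in\soc(G)$, and then read off finiteness and injectivity on $X$ from the free abelian structure of $(G,+)$. Your one refinement---taking $m\ge 2$ divisible by $n$ rather than $n$ itself---is a genuine (if minor) improvement over the paper, since when $(X,r)$ is the trivial solution one has $n=1$ and the paper's choice $I=nG=G$ collapses $B$ to zero, whereas your $m\ge 2$ keeps $\pi_{|X}$ injective in that boundary case as well.
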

\begin{proof}
Let $G=G(X,r)$ be the structure group of the finite solution
$(X,r)$. We know that the additive group of the left brace $(G, +,
.)$ associated with $G$ is free abelian with basis $X$ and ${
}^ab=\mathcal{L}_a(b)=ab-a$ and
$a^{b}=\mathcal{L}^{-1}_{\mathcal{L}_a(b)}(a)$, for all $a,b\in X$.
Since $X$ is finite, $[G: \soc(G)]<\infty$, say $[G: \soc(G)]=n$.
Consider the set $I=\{ ng\mid g\in G(X,r)\}$. We claim that $I$ is
an ideal of the brace $G$, that is, $I$ is a normal subgroup of the
multiplicative group $(G, .)$ which is invariant with respect to the
left actions  by elements of $G$, \cite{cjo2}. It is clear that $I$
is an additive subgroup of $(G, +)$ and $I\subseteq \soc(G(X,r))$.
Then for $u,v \in I$ one has $uv = u+ {}^uv = u+v \in I,$
$u^{-1}=-u\in I$, so $I$ is a subgroup of $G$. Let $g,h\in G$. Then
\[h(ng)h^{-1}= ({}^h(ng))(h^{ng})(h^{-1})=  ({}^h(ng))(hh^{-1})= {}^h(ng)= n({}hg)\in I.\]
Thus, $I$ is a normal subgroup of $(G, .)$ which is also invariant
under the left action by elements of $G$. Therefore $I$ is an ideal
of the left brace $(G, +, .)$.  It is not difficult to show that
brace quotient $B= G/I$ is a finite left brace of order $n^{m}$,
 where $m$ is the cardinality of $X$.
 Observe
also that for any two elements $x, y \in X, x\neq y$, one has
$x-y\notin I$, since the additive group of $(G, +)$ is free abelian
with a basis $X$. Now the restriction of the natural map
$G\rightarrow G/I= B$ on the set $X$ is injective. The proposition
has been proved. \end{proof}

 At a conference in Porto Cesareo, B. Amberg mentioned
that he and his collaborators  first  became interested in
Jacobson radical rings because they gave them a way to construct
examples of triply-factorizable groups. Later, they found more
ways of constructing such examples. Triply factorized groups can
be also used to define braces; see \cite[Theorem~18]{sysak2}.
Interesting results on triply factorized  groups can be found in
\cite{104, 101, 4, 6, sysak2, 102}. Triply factorized groups are
for example useful for investigating the structure of normal
subgroups of a group $G=AB$ which is a product of two subgroups.
Several authors investigated connections between triply factorized
groups and nearrings \cite{sysak2}, \cite{102}. It might be
interesting to investigate the connections  between nearrings and
braces. We would like to pose a related open question:

\begin{question}
 Investigate whether there is any relation between nearrings and
solutions of the YBE?
\end{question}

The multiplicative group of a brace $A$ is also called an adjoint
group of brace $A$. Observe that \cite[Corollary~3.6]{cjr}
asserts that every finite solvable group is a subgroup of an
adjoint group of some left brace. We also make the following simple
remark which follows from  \cite[Lemma~8.1]{cjo2} and
\cite[Corollary~3.8]{cjr}.
\smallskip

\noindent {\bf Remark.} (Related to \cite[Corollary~3.8]{cjr} and
\cite[Lemma~8.1]{cjo}) Every finite nilpotent group is a subgroup of
the adjoint group of a finite nilpotent ring.
\smallskip

  Let $p$ be a prime. By \cite[Lemma~8.1]{cjo}, every finite
$p$-group is isomorphic to a subgroup of the adjoint group of a
finite nilpotent ring $R$ such that $R$ has cardinality a power of
$p$. Let $G$ be a finite nilpotent group. Let $p_1,\dots p_m$ be the
the distinct prime divisors of the order of $G$. Let $P_i$ be the
Sylow $p_i$-subgroup of $G$. Then $P_i$ is isomorphic to a subgroup
of the adjoint group of a finite nilpotent ring $R_i$. Since $G\cong
P_1\times \cdots \times P_m$, it is clear that $G$ is isomorphic to
a subgroup of the adjoint group of the finite nilpotent ring
$R_1\times \cdots\times R_m$.
If $R$ is a ring, then the adjoint semigroup of $R$ is defined by
$a\circ b=a+b+a\cdot b$.

  By following the technique of the
proof of \cite[Lemma~8.1]{cjo} we get the following result.

\begin{proposition}
Let $G$ be a group and let $R$ be a ring (with unit). Then $G$ is
isomorphic to a subgroup of the adjoint semigroup of the group ring
$R[G]$.
\end{proposition}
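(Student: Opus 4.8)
The plan is to reduce the claim to two elementary observations together with a one-line computation. The first observation is the reason the operation $\circ$ is called \emph{adjoint}: for any ring $S$ with unit, the shift map $S\to S$, $x\mapsto x+1$, carries the adjoint semigroup $(S,\circ)$ isomorphically onto the multiplicative monoid $(S,\cdot)$, because $(a\circ b)+1=a+b+ab+1=(a+1)(b+1)$. In particular $0$ is the identity of $(S,\circ)$, and embedding $G$ into $(S,\circ)$ is the same problem as embedding $G$ into $(S,\cdot)$ after translating by the unit. I would apply this with $S=R[G]$.

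The second observation is the canonical embedding of $G$ into the multiplicative monoid of its group ring: identifying each $g\in G$ with the basis element $1_R\,g\in R[G]$ gives a homomorphism $G\to (R[G],\cdot)$ which is injective whenever $R\neq 0$, since the elements $\{g:g\in G\}$ form an $R$-basis of $R[G]$ and in particular are pairwise distinct. The unit of $R[G]$ is the basis element $1_R e=1$ attached to the identity $e$ of $G$.

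Combining the two, I would define $\psi\colon G\to R[G]$ directly by $\psi(g)=g-1$ and verify in one line that it is a homomorphism into the adjoint semigroup:
\begin{equation*}
\psi(g)\circ\psi(h)=(g-1)+(h-1)+(g-1)(h-1)=gh-1=\psi(gh).
\end{equation*}
Injectivity is immediate from the distinctness of the basis elements: $\psi(g)=\psi(h)$ forces $g=h$ in $R[G]$, hence in $G$. There is essentially no serious obstacle here; the only point requiring a word of care is the hypothesis that $R$ is nonzero (equivalently $1_R\neq 0_R$), which is what guarantees that the group-ring basis elements are distinct and that the map is a genuine embedding rather than collapsing when $R=0$. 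Everything else is a routine direct computation, so I would not expect to need any of the brace machinery developed earlier — the adjoint-semigroup identity $(a\circ b)+1=(a+1)(b+1)$ does all the work.
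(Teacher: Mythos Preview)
Your proposal is correct and essentially identical to the paper's own proof: the paper defines the same map $f(g)=g-1$, verifies the same one-line identity $f(gh)=(g-1)(h-1)+(g-1)+(h-1)=f(g)\circ f(h)$, and notes injectivity. Your additional framing via the shift $x\mapsto x+1$ and your remark that $R\neq 0$ is needed for injectivity are nice touches the paper omits, but the core argument is the same.
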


\begin{proof}
Let $f\colon G\longrightarrow R[G]$ be the map defined by
$f(g)=g-1$, for $g\in G$. Clearly $f$ is injective. Let $g,h\in G$.
We have that
$$f(gh)=gh-1=(g-1)(h-1)+g-1+h-1=(g-1)\circ (h-1)=f(g)\circ f(h).$$
Therefore $f$ is an injective homomorphism of semigroups from $G$
into the adjoint semigroup of $R[G]$.
\end{proof}


\section*{Acknowledgments}
The research of the first author was partially supported by grants
DGI MICIIN  MTM2011-28992-C02-01 and MINECO MTM2014-53644-P. The
research of the second author was partially supported by Grant I
02/18 "Computational and Combinatorial Methods in Algebra and
Applications"
 of the Bulgarian National Science Fund,  by The Abdus Salam
International Centre for Theoretical Physics (ICTP), Trieste,  and
by Max-Planck Institute for Mathematics, Bonn.
 The research of the third author was supported with ERC grant 320974.

$ $

$ $

{\small Ferran Ced{\' o}, Departament de Matem\`{a}tiques,
Universitat Aut\`{o}noma de Barcelona, 08193 Bellaterra (Barcelona),
Spain
\\

Tatiana Gateva-Ivanova,  American University in Bulgaria, 2700
Blagoevgrad,  and Institute of Mathematics and Informatics,
Bulgarian Academy of
Sciences, 1113 Sofia, Bulgaria \\

Agata Smoktunowicz, School of Mathematics,
The University of Edinburgh,
James Clerk Maxwell Building,
The Kings Buildings, Mayfield Road
EH9 3JZ, Edinburgh}
\\

E-mail: \email{ cedo@mat.uab.cat, Tatyana@aubg.edu, A.Smoktunowicz@ed.ac.uk}

\end{document}